\title[Computing subsignatures of systems]{Computing subsignatures of systems with exchangeable component lifetimes}
\author{Jean-Luc Marichal}
\address{Mathematics Research Unit, FSTC, University of Luxembourg, 6, rue Coudenhove-Kalergi, L-1359 Luxembourg, Luxembourg}
\email{jean-luc.marichal[at]uni.lu }
\date{July 11, 2014}
\begin{document}

\theoremstyle{plain}
\newtheorem{theorem}{Theorem}
\newtheorem{lemma}[theorem]{Lemma}
\newtheorem{proposition}[theorem]{Proposition}
\newtheorem{corollary}[theorem]{Corollary}
\newtheorem{fact}[theorem]{Fact}
\newtheorem*{main}{Main Theorem}

\theoremstyle{definition}
\newtheorem{definition}[theorem]{Definition}
\newtheorem{example}{Example}
\newtheorem{algorithm}{Algorithm}

\theoremstyle{remark}
\newtheorem*{conjecture}{onjecture}
\newtheorem{remark}{Remark}
\newtheorem{claim}{Claim}

\newcommand{\N}{\mathbb{N}}
\newcommand{\R}{\mathbb{R}}
\newcommand{\Q}{\mathbb{Q}}
\newcommand{\Vspace}{\vspace{2ex}}
\newcommand{\bfx}{\mathbf{x}}
\newcommand{\bfy}{\mathbf{y}}
\newcommand{\bfz}{\mathbf{z}}
\newcommand{\bfh}{\mathbf{h}}
\newcommand{\bfe}{\mathbf{e}}
\newcommand{\os}{\mathrm{os}}
\newcommand{\dd}{\,\mathrm{d}}

\begin{abstract}
The subsignatures of a system with continuous and exchangeable component lifetimes form a class of indexes ranging from the Samaniego signature to the
Barlow-Proschan importance index. These indexes can be computed through explicit linear expressions involving the values of the structure
function of the system. We show how the subsignatures can be computed more efficiently from the reliability function of the system via
identifications of variables, differentiations, and integrations.
\end{abstract}

\keywords{Semicoherent system, system signature, system subsignature, Barlow-Proschan index, reliability function.}

\subjclass[2010]{62N05, 90B25, 94C10}

\maketitle

\section{Introduction}

Consider an $n$-component system $(C,\phi)$, where $C$ is the set $\{1,\ldots,n\}$ of its components and $\phi\colon\{0,1\}^n\to\{0,1\}$ is its structure
function which expresses the state of the system in terms of the states of its components. We assume that the system is semicoherent, which
means that $\phi$ is nondecreasing in each variable and satisfies the conditions $\phi(0,\ldots,0)=0$ and $\phi(1,\ldots,1)=1$. We also assume
that the components have continuous and exchangeable lifetimes $T_1,\ldots,T_n$.

The author \cite{MarMata} recently introduced the concept of subsignature of a system as follows. Let $M$ be a nonempty subset of the set $C$
of components and let $m=|M|$. The \emph{$M$-signature} of the system is the $m$-tuple $\mathbf{s}_M=(s_M^{(1)},\ldots,s_M^{(m)})$, where $s_M^{(k)}$ is the
probability that the $k$-th failure among the components in $M$ causes the system to fail. That is,
$$
s_M^{(k)} ~=~ \Pr(T_S=T_{k:M}),\qquad k\in\{1,\ldots,m\}\, ,
$$
where $T_S$ and $T_{k:M}$ denote, respectively, the lifetime of the system and the $k$-th smallest lifetime of the components in $M$, i.e., the $k$-th order statistic obtained by rearranging the
variables $T_i$ ($i\in M$) in ascending order of magnitude. A \emph{subsignature} of the system is an $M$-signature for some $M\subseteq C$.

When $M=C$ the $M$-signature reduces to the \emph{signature} $\mathbf{s}=(s_1,\ldots,s_n)$ of the system, a concept introduced in 1985 by Samaniego~\cite{Sam85} to compare different system designs and to easily compute the reliability of any system.\footnote{Actually, Samaniego~\cite{Sam85} proved that, when the component lifetimes are independent and identically distributed, the system reliability can always be expressed as the sum of the order statistics distributions weighted by the signature $\mathbf{s}$; this result was also established \cite{NavRyc07} in the more general case of exchangeable lifetimes} When $M$ is a singleton $\{j\}$ the $M$-signature reduces to the $1$-tuple $s_{\{j\}}^{(1)} = \Pr(T_S=T_j)$, which is the
\emph{Barlow-Proschan index} for component $j$, a concept introduced in 1975 by Barlow and Proschan~\cite{BarPro75} to measure the importance of
the components. Thus, the subsignatures define a class of $2^n-1$ indexes that range from the standard signature (when $M=C$) to the
Barlow-Proschan index (when $M$ is a singleton).

The $M$-signature of a system can be computed through any of the following explicit formulas (see \cite{MarMata}):\footnote{Here and throughout we identify Boolean vectors $\bfx\in\{0,1\}^n$ and subsets $A\subseteq\{1,\ldots,n\}$ by setting $x_i=1$ if and only if $i\in A$. We thus use the
same symbol to denote both a function $f\colon\{0,1\}^n\to\R$ and its corresponding set function $f\colon 2^{\{1,\ldots,n\}}\to\R$ interchangeably.}
\begin{eqnarray}
s_M^{(k)} &=& \sum_{\textstyle{A\subseteq C\atop |M\cap A|=m-k+1}}\, \frac{m-k+1}{n\,{n-1\choose |A|-1}}\,\phi(A)-\sum_{\textstyle{A\subseteq C\atop
|M\cap A|=m-k}}\, \frac{k}{n\,{n-1\choose |A|}}\,\phi(A)\, ,\label{eq:fgt1}\\
s_M^{(k)} &=& \sum_{j\in M}\,\sum_{\textstyle{A\subseteq C\setminus\{j\}\atop |M\setminus A|=k}}\,\frac{1}{n\,{n-1\choose
|A|}}\,\big(\phi(A\cup\{j\})-\phi(A)\big)\, .\label{eq:fgt2}
\end{eqnarray}

Equations~(\ref{eq:fgt1}) and (\ref{eq:fgt2}) show that, under the exchangeable assumption, the subsignatures do not depend on the distribution of the variables $T_1,\ldots,T_n$ but only on the
structure function. When $M=C$, formula (\ref{eq:fgt1}) reduces to Boland's formula \cite{Bol01}
$$
s_k ~=~ \sum_{\textstyle{A\subseteq C\atop |A|=n-k+1}}\frac{1}{{n\choose |A|}}\,\phi(A)-\sum_{\textstyle{A\subseteq C\atop
|A|=n-k}}\frac{1}{{n\choose |A|}}\,\phi(A)\, .
$$
When $M=\{j\}$, formula (\ref{eq:fgt2}) reduces to Shapley-Shubik's formula \cite{Sha53,ShaShu54}
\begin{equation}\label{eq:rt7}
I_{\mathrm{BP}}^{(j)} ~=~ \sum_{A\subseteq C\setminus\{j\}} \frac{1}{n\,{n-1\choose |A|}}\,\big(\phi(A\cup\{j\})-\phi(A)\big)\, .
\end{equation}
The computation of the subsignatures by means of Eqs.~(\ref{eq:fgt1}) and (\ref{eq:fgt2}) may be cumbersome and tedious for large systems since it
requires the evaluation of $\phi(A)$ for every $A\subseteq C$. To overcome this issue, in this paper we show how these indexes can be computed
from simple manipulations of the reliability function of the structure $\phi$ such as identifications of variables and differentiations.

Recall that the \emph{reliability function} of the structure $\phi$ is the multilinear function $h\colon
[0,1]^n\to\R$ defined by
\begin{equation}\label{eq:345hg}
h(\bfx) ~=~ h(x_1,\ldots,x_n) ~=~ \sum_{A\subseteq C}\phi(A)\, \prod_{i\in A}x_i\,\prod_{i\in C\setminus A}(1-x_i).
\end{equation}
When the component lifetimes are independent, this function expresses the reliability of the system in terms of the component reliabilities (see \cite[Chap.~2]{BarPro81} for a background on reliability functions and \cite[Section 3.2]{Ram90} for a more recent reference). It is easy to see that this function can always be put in the standard
multilinear form
\begin{equation}\label{eq:345hg2}
h(\bfx) ~=~ \sum_{A\subseteq C}d(A)\,\prod_{i\in A}x_i\, ,
\end{equation}
where the link between the coefficients $d(A)$ and the values $\phi(A)$ is given through the conversion formulas
$$
d(A) ~=~ \sum_{B\subseteq A}(-1)^{|A|-|B|}\, \phi(B)\quad\mbox{and}\quad \phi(A) ~=~ \sum_{B\subseteq A}d(B)\, .
$$

\begin{example}
The structure of a system consisting of two components connected in parallel is given by
$$
\phi(x_1,x_2) ~=~ \max(x_1,x_2) ~=~ x_1\amalg x_2 ~=~ x_1+x_2-x_1 x_2{\,},
$$
where $\amalg$ is the (associative) coproduct operation defined by $x\amalg y = 1-(1-x)(1-y)$. Considering only the multilinear expression of function $\phi$, one immediately obtains the corresponding reliability function $h(x_1,x_2) = x_1+x_2-x_1 x_2$.\qed
\end{example}

For any function $f$ of $n$ variables, we denote its diagonal section $f(x,\ldots,x)$ simply by $f(x)$. For instance, from Eqs.~(\ref{eq:345hg}) and (\ref{eq:345hg2}) we derive
$$
h(x) ~=~ \sum_{A\subseteq C}\phi(A)\, x^{|A|}\, (1-x)^{n-|A|} ~=~ \sum_{A\subseteq C}d(A)\, x^{|A|}\, .
$$

Owen \cite{Owe72} observed that the right-hand expression in Eq.~(\ref{eq:rt7}), which is the Barlow-Proschan index for component $j$, can be
computed by integrating over $[0,1]$ the diagonal section of the $j$-th partial derivative of $h$. That is,
\begin{equation}\label{eq:sd5f}
I_{\mathrm{BP}}^{(j)} ~=~ \int_0^1(\partial_jh)(t)\, dt\, .
\end{equation}
Thus, this formula provides a simple way to compute $I_{\mathrm{BP}}^{(j)}$ from the reliability function $h$  (at least simpler than the use of Eq.~(\ref{eq:rt7})). As a by-product, from Eq.~(\ref{eq:sd5f}) we easily derive the following integral formula
$$
\sum_{j=1}^n I_{\mathrm{BP}}^{(j)}\, x_j ~=~ \int_0^1\frac{d}{dt}{\,}h\big(t\, x_1+z(1-x_1),\ldots,t\, x_n+z(1-x_n)\big)\big|_{z=t}^{\mathstrut}\, dt\, .
$$

\begin{example}\label{ex:21j4hg}
Consider the bridge structure as indicated in Figure~\ref{fig:bs}. The corresponding structure and reliability functions are respectively given by
$$
\phi(x_1,\ldots,x_5) ~=~ x_1\, x_4\amalg x_2\, x_5\amalg x_1\, x_3\, x_5\amalg x_2\, x_3\, x_4
$$
and
\begin{eqnarray*}
h(x_1,\ldots,x_5) &=& x_1 x_4 + x_2 x_5 + x_1 x_3 x_5 + x_2 x_3 x_4 \\
&& \null - x_1 x_2 x_3 x_4 - x_1 x_2 x_3 x_5 - x_1 x_2 x_4 x_5  - x_1 x_3 x_4 x_5 - x_2 x_3 x_4 x_5\\
&& \null + 2\, x_1 x_2 x_3 x_4 x_5\, .
\end{eqnarray*}
By using Eq.~(\ref{eq:sd5f}) we immediately obtain
$(I_{\mathrm{BP}}^{(1)},\ldots,I_{\mathrm{BP}}^{(5)})=\big(\frac{7}{30},\frac{7}{30},\frac{1}{15},\frac{7}{30},\frac{7}{30}\big)$. Indeed, we have for instance
$$
I_{\mathrm{BP}}^{(3)} ~=~ \int_0^1(\partial_3h)(t)\, dt ~=~ \int_0^1 (2t^2-4t^3+2t^4)\, dt ~=~ \textstyle{\frac{1}{15}}{\,}.
$$
\qed
\end{example}

\setlength{\unitlength}{4ex}
\begin{figure}[htbp]\centering
\begin{picture}(11,4)
\put(3,0.5){\framebox(1,1){$2$}} \put(3,2.5){\framebox(1,1){$1$}} \put(5,1.5){\framebox(1,1){$3$}} \put(7,0.5){\framebox(1,1){$5$}}
\put(7,2.5){\framebox(1,1){$4$}}%
\put(0,2){\line(1,0){1.5}}\put(1.5,2){\line(2,-1){1.5}}\put(5.5,0){\line(-2,1){1.5}}\put(1.5,2){\line(2,1){1.5}}\put(5.5,4){\line(-2,-1){1.5}}%
\put(0,2){\circle*{0.15}}%
\put(9.5,2){\line(1,0){1.5}}\put(5.5,0){\line(2,1){1.5}}\put(9.5,2){\line(-2,-1){1.5}}\put(5.5,4){\line(2,-1){1.5}}\put(9.5,2){\line(-2,1){1.5}}%
\put(11,2){\circle*{0.15}}%
\put(5.5,0){\line(0,1){1.5}}\put(5.5,4){\line(0,-1){1.5}}
\end{picture}
\caption{Bridge structure} \label{fig:bs}
\end{figure}
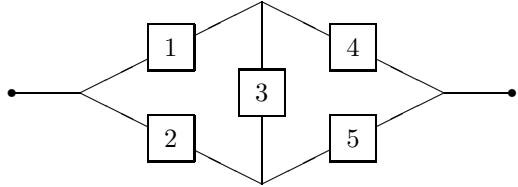

\begin{remark}
Example~\ref{ex:21j4hg} illustrates the fact that the reliability function $h$ can be easily obtained from the minimal path sets of the system simply by expanding the coproducts in $\phi$ and then simplifying the resulting polynomial expression (using $x_i^2=x_i$).
\end{remark}

Similarly to Owen's method, in this note we provide a simple way to compute the system subsignatures only from the reliability function $h(\bfx)$, thus avoiding formulas (\ref{eq:fgt1}) and (\ref{eq:fgt2}) which require the evaluation of $\phi(A)$ for every $A\subseteq C$.

When $M$ is a singleton, our method reduces to Owen's. When $M=C$, it reduces to the following algorithm (obtained in \cite{Mar014}) for the computation of the Samaniego
signature.

Let $f$ be a univariate polynomial of degree $\leqslant n$,
$$
f(x) ~=~ a_n\, x^n+\cdots + a_1\, x+ a_0\, .
$$
The $n$-\emph{reflected} of $f$ is the polynomial $R^n f$ defined by
$$
(R^n f)(x) ~=~ a_0\, x^n+a_1\, x^{n-1}+\cdots + a_n\, ,
$$
or equivalently, $(R^n f)(x)=x^n\, f(1/x)$.

\begin{algorithm}\label{algo:s}
The following algorithm inputs the number $n$ of components and the reliability function $h(x)$ and outputs the signature $\mathbf{s}$ of the
system.

\smallskip

\begin{quote}
\begin{enumerate}
\item[\textbf{Step 1.}] Let $g(x)=Dh(x)$ be the derivative of $h(x)$.

\item[\textbf{Step 2.}] For every $k\in\{1,\ldots,n\}$, let $c_{k-1}$ be the coefficient of $x^{k-1}$ in the $(n-1)$-degree polynomial $(R^{n-1}g)(x+1) = (x+1)^{n-1}\, g\big(\frac{1}{x+1}\big)$.

\item[\textbf{Step 3.}] We have $s_k = c_{k-1}/(k{\,}{n\choose k})$ for $k=1,\ldots,n$.
\end{enumerate}
\end{quote}

\smallskip
\end{algorithm}

Even though such an algorithm can be easily executed by hand for small $n$, a computer algebra system can be of great assistance for large $n$.

\begin{example}
Consider the bridge structure as indicated in Figure~\ref{fig:bs}. For this structure we have
$$
h(x) ~=~ 2x^2+2x^3-5x^4+2x^5,\qquad g(x) ~=~ 4x+6x^2-20x^3+10x^4\, ,
$$
and
$$
(R^4g)(x) ~=~ 10-20x+6x^2+4x^3.
$$
It follows that $(R^4g)(x+1)=4 x + 18 x^2 + 4 x^3$ and hence $\mathbf{s}=\big(0,\frac{1}{5},\frac{3}{5},\frac{1}{5},0\big)$. Indeed, we have for instance $s_3=c_2/(3{5\choose 3}) =\frac{3}{5}$.\qed
\end{example}

Denoting the coefficient of $x^{k-1}$ in the polynomial $f(x)$ by $[x^{k-1}]f(x)$, Algorithm~\ref{algo:s} can be summarized into the single equation (see \cite{Mar014})
$$
s_k ~=~ \frac{1}{k\,{n\choose k}}\, [x^{k-1}]\big((R^{n-1}Dh)(x+1)\big)\, ,\qquad k=1,\ldots,n\, .
$$

This note is organized as follows. In Section 2 we provide an algorithm which subsumes both Owen's method and Algorithm~\ref{algo:s} for the computation of the system subsignatures from the reliability
function. We also show how to compute generating functions of subsignatures. In Section 3 we discuss the concept of $M$-signature in the special case
where $M$ is a modular set of the system.

\section{An algorithm for the computation of subsignatures}

We now present our main result, namely an algorithm for the computation of the system subsignatures from the reliability function.

\begin{algorithm}\label{algo:ss}
The following algorithm inputs a subset $M$ of $m$ components and the reliability function $h(\bfx)$ and outputs the $M$-signature $\mathbf{s}_M$ of
the system.

\smallskip

\begin{quote}
\begin{enumerate}
\item[\textbf{Step 1.}] Let $h(x,z)$ be the bivariate polynomial obtained from the reliability function $h(\bfx)$ by identifying to $x$ the variables
in $M$ and identifying to $z$ the variables in $C\setminus M$.

\item[\textbf{Step 2.}] Let $g(x,z)=\partial_x\,h(x,z)$.

\item[\textbf{Step 3.}] For every $k\in\{1,\ldots,m\}$, let $c_{k-1}(z)$ be the coefficient of $x^{k-1}$ in the polynomial $(R_1^{m-1}g)(x+1,z) = (x+1)^{m-1}\, g\big(\frac{1}{x+1}{\,},z\big)$, where $R_1^{m-1}g$ is the $(m-1)$-reflected of $g$ with respect to its first argument.

\item[\textbf{Step 4.}] We have $s_M^{(k)} = \int_0^1t^{m-k}\, (1-t)^{k-1}\, c_{k-1}(t)\, dt$ for $k=1,\ldots,m$.
\end{enumerate}
\end{quote}

\smallskip
\end{algorithm}

\begin{proof}[Proof of Algorithm~\ref{algo:ss}]
By Eq.~(\ref{eq:345hg}) we have
$$
h(\bfx) ~=~ \sum_{A\subseteq C}\phi(A)\,\prod_{i\in M\cap A}x_i\,\prod_{i\in M\setminus A}(1-x_i)\,\prod_{i\in A\setminus M}x_i\,\prod_{i\in
C\setminus (A\cup M)}(1-x_i)\, .
$$
It follows that
\begin{eqnarray*}
h(x,z) &=& \sum_{A\subseteq C}\phi(A)\, x^{|M\cap A|}\,(1-x)^{|M\setminus A|}\, z^{|A\setminus M|}\, (1-z)^{n-|A|-|M\setminus A|}\, ,\\
g(x,z) &=& \sum_{A\subseteq C}\phi(A)\, |M\cap A|\, x^{|M\cap A|-1}\, (1-x)^{|M\setminus A|}\,  z^{|A\setminus M|}\, (1-z)^{n-|A|-|M\setminus A|}\\
&& \null -\sum_{A\subseteq C}\phi(A)\,|M\setminus A|\, x^{|M\cap A|}\, (1-x)^{|M\setminus A|-1}\,  z^{|A\setminus M|}\, (1-z)^{n-|A|-|M\setminus
A|}\, ,
\end{eqnarray*}
and
\begin{eqnarray*}
(R_1^{m-1}g)(x+1,z) &=& \sum_{A\subseteq C}\phi(A)\, |M\cap A|\, x^{|M\setminus A|}\, z^{|A\setminus M|}\, (1-z)^{n-|A|-|M\setminus A|}\\
&& \null -\sum_{A\subseteq C}\phi(A)\,|M\setminus A|\, x^{|M\setminus A|-1}\, z^{|A\setminus M|}\, (1-z)^{n-|A|-|M\setminus A|}\, .
\end{eqnarray*}
Let $k\in\{1,\ldots,m\}$. The coefficient of $x^{k-1}$ in the polynomial $(R_1^{m-1}g)(x+1,z)$ is then given by
\begin{eqnarray*}
c_{k-1}(z) &=& \sum_{\textstyle{A\subseteq C\atop |M\setminus A|=k-1}}\phi(A)\, |M\cap A|\, z^{|A\setminus M|}\, (1-z)^{n-|A|-|M\setminus A|}\\
&& \null -\sum_{\textstyle{A\subseteq C\atop |M\setminus A|=k}}\phi(A)\,|M\setminus A|\, z^{|A\setminus M|}\, (1-z)^{n-|A|-|M\setminus A|}\, .
\end{eqnarray*}
Thus we have
\begin{eqnarray*}
t^{m-k}\, (1-t)^{k-1}\, c_{k-1}(t) &=& \sum_{\textstyle{A\subseteq C\atop |M\cap A|=m-k+1}}\phi(A)\, (m-k+1)\, t^{|A|-1}\, (1-t)^{n-|A|}\\
&& \null -\sum_{\textstyle{A\subseteq C\atop |M\cap A|=m-k}}\phi(A)\, k\, t^{|A|}\, (1-t)^{n-|A|-1}\, .
\end{eqnarray*}
By integrating the right-hand side over $[0,1]$ and then using the classical identity
\begin{equation}\label{eq:ClasId}
\int_0^1 t^p\,(1-t)^q\, dt ~=~ \frac{1}{(p+q+1)\, {p+q\choose p}}\, ,\qquad p,q\in\N\, ,
\end{equation}
we precisely obtain the right-hand side of Eq.~(\ref{eq:fgt1}). This completes the proof.
\end{proof}

As mentioned in the introduction, the signature $\mathbf{s}$ of the system can be computed by Algorithm~\ref{algo:s}. Although this algorithm was established in \cite{Mar014}, we now show how it can be easily derived from Algorithm~\ref{algo:ss}.

\begin{proof}[Proof of Algorithm~\ref{algo:s}]
We only need to prove Step 3. Using Step 4 in Algorithm~\ref{algo:ss} and then Eq.~(\ref{eq:ClasId}), we see that $s_k = c_{k-1}\, \int_0^1t^{n-k}\, (1-t)^{k-1}\, dt = c_{k-1}/(k{\,}{n\choose k})$ for every $k=1,\ldots,n$, which is sufficient.
\end{proof}

Algorithm~\ref{algo:ss} shows that the $M$-signature can be computed from the bivariate polynomial $h(x,z)$ without the full knowledge of the reliability function $h(\bfx)$. Thus, two $n$-component systems having the same bivariate polynomial $h(x,z)$ also have the same $M$-signature.

The following proposition provides explicit expressions for the coefficient $c_{k-1}(z)$ in terms of the coefficients of $(x-1)^i$ and $x^i$ in
$g(x,z)$. The expression given in Eq.~(\ref{eq:7s6fd1}) is particularly interesting for small values of $k$, while that in Eq.~(\ref{eq:7s6fd2}) is interesting for small values of $m-k$. For instance, we obtain
$$
c_0(z) ~=~ g(1,z)\quad\mbox{and}\quad c_{m-1}(z) ~=~ g(0,z)\, .
$$

\begin{proposition}\label{prop:s7d5f}
Let $g(x,z)$ and $c_{k-1}(z)$ be the functions defined in Algorithm~\ref{algo:ss}. Then we have
\begin{eqnarray}
c_{k-1}(z) &=& \sum_{i=0}^{k-1}(-1)^i\, {m-1-i\choose m-k}\,\big([(x-1)^i]\, g(x,z)\big)\, ,\label{eq:7s6fd1}\\
c_{k-1}(z) &=& \sum_{i=0}^{m-k} {m-1-i\choose k-1}\,\big([x^i]\, g(x,z)\big)\, ,\label{eq:7s6fd2}
\end{eqnarray}
where $[(x-1)^i]\, g(x,z)=\frac{1}{i!}\,(\partial_1^ig)(1,z)$ and $[x^i]\, g(x,z)=\frac{1}{i!}\,(\partial_1^ig)(0,z)$.
\end{proposition}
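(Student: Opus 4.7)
The plan is to expand $g(x,z)$ as a polynomial in $x$ in two different bases — the monomial basis $\{x^i\}$ and the shifted basis $\{(x-1)^i\}$ — and then compute $(R_1^{m-1}g)(x+1,z)$ explicitly in each case. Since $g(x,z) = \partial_x h(x,z)$ and $h(x,z)$ is a polynomial of degree at most $m$ in $x$ (because $h(\bfx)$ is multilinear and $m$ of its variables are identified with $x$), $g(x,z)$ has degree at most $m-1$ in $x$. This degree bound is what makes the reflection $R_1^{m-1}$ well defined and is the engine behind both identities.

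For Eq.~(\ref{eq:7s6fd2}), I would write $g(x,z) = \sum_{i=0}^{m-1}a_i(z)\,x^i$ with $a_i(z)=[x^i]\,g(x,z)$. Applying the reflection gives
$(R_1^{m-1}g)(x,z) = \sum_{i}a_i(z)\,x^{m-1-i}$, so
$$
(R_1^{m-1}g)(x+1,z) ~=~ \sum_{i=0}^{m-1}a_i(z)\,(x+1)^{m-1-i}.
$$
Extracting $[x^{k-1}]$ from each summand using the binomial theorem yields $\binom{m-1-i}{k-1}$, which vanishes for $i>m-k$, giving exactly Eq.~(\ref{eq:7s6fd2}).

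For Eq.~(\ref{eq:7s6fd1}), I would use the Taylor expansion $g(x,z)=\sum_{i\ge 0} b_i(z)(x-1)^i$ with $b_i(z)=\frac{1}{i!}(\partial_1^i g)(1,z)=[(x-1)^i]\,g(x,z)$. The key algebraic identity is
$$
\frac{1}{x+1}-1 ~=~ -\frac{x}{x+1},
$$
which lets me compute
$$
(R_1^{m-1}g)(x+1,z) ~=~ (x+1)^{m-1}\,g\!\left(\tfrac{1}{x+1},z\right) ~=~ \sum_{i\ge 0} b_i(z)\,(-1)^i\,x^i\,(x+1)^{m-1-i}.
$$
Extracting $[x^{k-1}]$ uses the binomial coefficient $\binom{m-1-i}{k-1-i}=\binom{m-1-i}{m-k}$, which vanishes for $i>k-1$, producing Eq.~(\ref{eq:7s6fd1}).

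The only genuine step of interest is recognizing the simplification $\frac{1}{x+1}-1=-\frac{x}{x+1}$, which collapses the second computation to a tidy binomial convolution; the rest is bookkeeping. The specializations $c_0(z)=g(1,z)$ (take $k=1$ in Eq.~(\ref{eq:7s6fd1}), only the $i=0$ term survives) and $c_{m-1}(z)=g(0,z)$ (take $k=m$ in Eq.~(\ref{eq:7s6fd2}), only the $i=0$ term survives) serve as useful sanity checks.
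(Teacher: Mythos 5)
Your proof is correct and takes essentially the same route as the paper's: expand $g(x,z)$ in the shifted basis $\{(x-1)^i\}$ for (\ref{eq:7s6fd1}) and in the monomial basis $\{x^i\}$ for (\ref{eq:7s6fd2}), push the expansion through the reflection $R_1^{m-1}$ (your identity $\frac{1}{x+1}-1=-\frac{x}{x+1}$ is exactly the computation the paper performs term by term on $(x-1)^i$), and read off the coefficient of $x^{k-1}$ by the binomial theorem. The only difference is cosmetic: the paper writes out the first identity in full and dismisses the second as ``similar,'' whereas you detail both and also justify the degree bound $\deg_x g\leqslant m-1$ that makes the reflection well defined.
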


\begin{proof}
We clearly have $g(x,z) = \sum_{i=0}^{m-1}\big([(x-1)^i]\, g(x,z)\big)\, (x-1)^i$ and hence
\begin{eqnarray*}
(R_1^{m-1}g)(x+1,z) &=& \sum_{i=0}^{m-1}\big([(x-1)^i]\, g(x,z)\big)\, (-1)^i\, (x+1)^{m-1-i}\\
&=& \sum_{i=0}^{m-1}\big([(x-1)^i]\, g(x,z)\big)\, (-1)^i\, \sum_{j=0}^{m-1-i}{m-1-i\choose j}\, x^{m-1-j}\\
&=& \sum_{j=0}^{m-1} x^{m-1-j}\,\sum_{i=0}^{m-1-j}(-1)^i\,{m-1-i\choose j}\,\big([(x-1)^i]\, g(x,z)\big)\, .
\end{eqnarray*}
Considering the coefficient of $x^{k-1}$ in the latter expression leads to formula~(\ref{eq:7s6fd1}). Formula~(\ref{eq:7s6fd2}) can be
established similarly.
\end{proof}

The following proposition gives an explicit expression for the \emph{generating function} $\sum_{k=1}^m s_M^{(k)}\, x^k$ of the $M$-signature in terms of the reliability function $h(\bfx)$.
Thus, it provides an alternative way to compute the $M$-signature.

\begin{proposition}\label{prop:sdf67}
Let $g(x,z)$ be the function defined in Algorithm~\ref{algo:ss}. Then we have
$$
\sum_{k=1}^m s_M^{(k)}\, x^k ~=~ \int_0^1 x\, R^{m-1}_t\,\big((R_1^{m-1}g)((t-1)\, x+1,\, z)\big)\big|_{z=t}^{\mathstrut}\, dt\, ,
$$
where $R_t^{m-1}$ denotes the $(m-1)$-reflection with respect to variable $t$. In particular,
$$
\sum_{k=1}^m s_M^{(k)} ~=~ \sum_{j\in M} I_{\mathrm{BP}}^{(j)} ~=~ \int_0^1 g(t,t)\, dt\, .
$$
\end{proposition}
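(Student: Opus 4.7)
The plan is to start from the integral expression produced by Algorithm~\ref{algo:ss}, multiply it by $x^k$, sum over $k$, and then recognize the resulting finite sum as the $(m-1)$-reflection in a new auxiliary variable $t$. First, I would write
$$
\sum_{k=1}^m s_M^{(k)}\,x^k ~=~ \int_0^1 \sum_{k=1}^m c_{k-1}(t)\,(1-t)^{k-1}\,t^{m-k}\,x^k\,dt,
$$
interchanging sum and integral, which is harmless since everything is polynomial.

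Next I would massage the integrand into the shape on the right-hand side. Since by definition $(R_1^{m-1}g)(y+1,z) = \sum_{k=1}^m c_{k-1}(z)\,y^{k-1}$, substituting $y=(t-1)x$ gives
$$
F(t) ~:=~ (R_1^{m-1}g)((t-1)x+1,z) ~=~ \sum_{k=1}^m c_{k-1}(z)\,(t-1)^{k-1}x^{k-1},
$$
which is a polynomial in $t$ of degree at most $m-1$. Applying the $(m-1)$-reflection in $t$ amounts to computing $t^{m-1}F(1/t)$; using $(1/t-1)^{k-1}=(1-t)^{k-1}/t^{k-1}$ this yields
$$
R_t^{m-1}F(t) ~=~ \sum_{k=1}^m c_{k-1}(z)\,(1-t)^{k-1}\,t^{m-k}\,x^{k-1}.
$$
Multiplying by $x$, setting $z=t$, and integrating against $dt$ on $[0,1]$ recovers exactly the double sum above, which proves the first identity.

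For the second identity I would set $x=1$ in the formula just proved. Then $F(t)=(R_1^{m-1}g)(t,z)$ has degree $\leqslant m-1$ in $t$, so the $(m-1)$-reflection is an involution, i.e.\ $R_t^{m-1}(R_1^{m-1}g)(t,z)=g(t,z)$. Therefore $\sum_{k=1}^m s_M^{(k)} = \int_0^1 g(t,t)\,dt$. Finally, by construction $h(x,z)$ is obtained from $h(\mathbf{x})$ by identifying the variables in $M$ to $x$ and those in $C\setminus M$ to $z$, so the chain rule gives $g(t,t)=\partial_x h(x,z)|_{x=z=t} = \sum_{j\in M}(\partial_j h)(t,\ldots,t)$, and integrating term by term together with Owen's formula (\ref{eq:sd5f}) yields $\int_0^1 g(t,t)\,dt = \sum_{j\in M} I_{\mathrm{BP}}^{(j)}$.

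The only mildly delicate step is the reflection bookkeeping: one must verify that $F(t)$ really has degree at most $m-1$ in $t$ so that $R_t^{m-1}$ is the naive coefficient-reversal, and that $R_1^{m-1}g$ has the right degree in its first argument for the involution in the second part to hold. Both facts follow immediately from the formula for $g(x,z)$ derived in the proof of Algorithm~\ref{algo:ss}, where the exponent of $x$ ranges over $\{0,\ldots,m-1\}$.
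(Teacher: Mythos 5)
Your proof is correct and takes essentially the same route as the paper's: both rest on the identity $(R_1^{m-1}g)(y+1,z)=\sum_{k=1}^m c_{k-1}(z)\, y^{k-1}$, the substitution $y=(t-1)\, x$, and the fact that the reflection $R_t^{m-1}$ converts $(t-1)^{k-1}$ into $(1-t)^{k-1}\, t^{m-k}$, after which Step 4 of Algorithm~\ref{algo:ss} finishes the argument. Your handling of the special case $x=1$ (involution property of the reflection, chain rule, and Owen's formula~(\ref{eq:sd5f})) is more detailed than the paper's one-line remark, but it reaches the same conclusion and the degree bookkeeping you flag is indeed satisfied since $g(x,z)$ has degree at most $m-1$ in $x$.
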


\begin{proof}
By definition of the polynomial $R_1^{m-1}g$ in Algorithm~\ref{algo:ss} we have that
$$
\sum_{k=1}^m c_{k-1}(z)\, t^{k-1}\, x^{k-1} ~=~ (R_1^{m-1}g)(t\, x+1,z)\, .
$$
Multiplying through by $x$, replacing $t$ by $t-1$, and then applying $R^{m-1}_t$ to both sides, we obtain
$$
\displaystyle{\sum_{k=1}^m c_{k-1}(z)\, t^{m-k}\, (1-t)^{k-1}\, x^k} ~=~ x\, R^{m-1}_t\,\big((R_1^{m-1}g)((t-1)\, x+1,\, z)\big)\, .
$$
We then conclude by using Step 4 in Algorithm~\ref{algo:ss}. The particular case can be derived from the main result by setting $x=1$.
\end{proof}

From Proposition~\ref{prop:sdf67} we immediately derive the following algorithm for the computation of the generating function of the
$M$-signature. An advantage of this algorithm over Algorithm~\ref{algo:ss} is that it provides the $M$-signature without computing all the
coefficients $c_{k-1}(z)$.

\begin{algorithm}\label{algo:gfss}
The following algorithm inputs a subset $M$ of $m$ components and the reliability function $h(\bfx)$ and outputs the generating function of the
$M$-signature $\mathbf{s}_M$ of the system.

\smallskip

\begin{quote}
\begin{enumerate}
\item[\textbf{Step 1.}] Let $h(x,z)$ be the bivariate polynomial obtained from the reliability function $h(\bfx)$ by identifying to $x$ the variables
in $M$ and identifying to $z$ the variables in $C\setminus M$.

\item[\textbf{Step 2.}] Let $g(x,z)=\partial_x\,h(x,z)$.

\item[\textbf{Step 3.}] Let $f(t,x,z)=x\, (R_1^{m-1}g)((t-1)\, x+1,z)$.

\item[\textbf{Step 4.}] We have $\sum_{k=1}^m s_M^{(k)}\, x^k = \int_0^1 (R_1^{m-1}f)(t,x,t)\, dt$.
\end{enumerate}
\end{quote}

\smallskip
\end{algorithm}

\begin{example}
Let us consider again the bridge structure as indicated in Figure~\ref{fig:bs} and let us compute the generating function of the corresponding
$M$-signature for $M=\{1,2,3\}$. We have
\begin{eqnarray*}
h(x,z) &=& 2 x z + 2 x^2 z - 2 x^3 z - 3 x^2 z^2 + 2 x^3 z^2\, ,\\
g(x,z) &=& 2 z + 4 x z - 6 x^2 z - 6 x z^2 + 6 x^2 z^2\, ,\\
f(t,x,z) &=& -8 x^2 z + 8 t x^2 z + 2 x^3 z - 4 t x^3 z + 2 t^2 x^3 z + 6 x^2 z^2 - 6 t x^2 z^2\, ,
\end{eqnarray*}
and finally $\sum_{k=1}^3 s_M^{(k)}\, x^k = \frac{11}{30}\, x^2+\frac{1}{6}\, x^3$. Thus $\mathbf{s}_M=\textstyle{(0,\frac{11}{30},\frac{1}{6})}$.\qed
\end{example}

\section{Subsignatures associated with modular sets}

Suppose that the system contains a module $(M,\chi)$, where $M\subseteq C$ is the corresponding modular set
and $\chi\colon\{0,1\}^M\to\{0,1\}$ is the corresponding structure function. In this case the structure function of the system expresses through the composition
\begin{equation}\label{eq:8dsf6}
\phi(\bfx) ~=~ \psi\big(\chi(\bfx^M),\bfx^{C\setminus M}\big)\, ,
\end{equation}
where $\bfx^M=(x_i)_{i\in M}$ and $\bfx^{C\setminus M}=(x_i)_{i\in C\setminus M}$. The reduced system (of $n-m+1$ components) obtained from the original system $(C,\phi)$ by considering the modular set $M$ as a single macro-component $[M]$ will be denoted by $(C_M,\psi)$, where $C_M=(C\setminus M)\cup\{[M]\}$ and $\psi\colon\{0,1\}^{C_M}\to\{0,1\}$ is the organizing structure. For general background on modules, see \cite[Chap.~1]{BarPro81}.

Denote by $T_M$ the lifetime of the module and by $\mathbf{s}^M$ the signature of the module as an $m$-component system, that is, the
$m$-tuple whose $k$-th coordinate is given by $s_k^M = \Pr(T_M = T_{k:M})$ for $k=1,\ldots,m$.

The following theorem shows that $s_M^{(k)}$ factorizes into the product of $s_k^M$ and the expected
value of the function $(\partial_{[M]} h_{\psi})(t)$ with respect to a certain beta distribution, where $h_{\psi}\colon [0,1]^{C_M}\to\R$ is the reliability function of the structure $\psi$. When $M$ is a singleton, this result reduces to Owen's formula (\ref{eq:sd5f}).

This result was established in \cite[Cor.~16]{MarMata}. Here we give a simpler proof based on Algorithms~\ref{algo:s} and \ref{algo:ss}.

\begin{theorem}[{\cite{MarMata}}]\label{thm:76eqw}
For every nonempty modular set $M\subseteq C$ and every $k\in\{1,\ldots,m\}$, we have
$$
s_M^{(k)} ~=~ s_k^M\,\int_0^1 r_{k,m}(t)\, (\partial_{[M]} h_{\psi})(t)\, dt\, ,
$$
where $r_{k,m}(t)$ is the p.d.f.\ of the beta distribution on $[0,1]$ with
parameters $\alpha=m-k+1$ and $\beta=k$.
\end{theorem}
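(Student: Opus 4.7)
The plan is to apply Algorithm~\ref{algo:ss} with the given subset $M$ and to exploit the product structure that the modular decomposition (\ref{eq:8dsf6}) imparts on the bivariate polynomial $g(x,z)$, then to recognize the coefficient extracted in Step~3 as the output of Algorithm~\ref{algo:s} applied to the module itself. The starting point is the composition identity for reliability functions,
\[
h(\bfx) \,=\, h_\psi\bigl(h_\chi(\bfx^M),\,\bfx^{C\setminus M}\bigr),
\]
which follows from the uniqueness of multilinear extensions: the right-hand side is multilinear in $\bfx$ (as $h_\chi$ is multilinear in $\bfx^M$ and $h_\psi$ is affine in each of its arguments) and agrees with $\phi$ on $\{0,1\}^n$ in view of (\ref{eq:8dsf6}).

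Steps~1 and~2 of Algorithm~\ref{algo:ss} then give $h(x,z)=h_\psi\bigl(h_\chi(x),z,\ldots,z\bigr)$ and, by the chain rule,
\[
g(x,z) \,=\, h_\chi'(x)\,(\partial_{[M]}h_\psi)\bigl(h_\chi(x),z,\ldots,z\bigr).
\]
The crucial observation is that $h_\psi$ is multilinear, so $\partial_{[M]}h_\psi$ does not depend on the macro-component variable; hence $g(x,z)$ factors into the product $g(x,z)=h_\chi'(x)\,G(z)$, where $G(z)=(\partial_{[M]}h_\psi)(z)$ is the diagonal section appearing in the theorem.

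Because of this product form, the reflection and translation in Step~3 of Algorithm~\ref{algo:ss} act only on the factor $h_\chi'(x)$, and one obtains
\[
c_{k-1}(z) \,=\, \Bigl([x^{k-1}]\,(R^{m-1}h_\chi')(x+1)\Bigr)\,G(z).
\]
Applying Algorithm~\ref{algo:s} to the module $(M,\chi)$, viewed as an $m$-component system with signature $\mathbf{s}^M$, identifies the bracketed scalar as $k\binom{m}{k}\,s_k^M$. Plugging this into Step~4 of Algorithm~\ref{algo:ss} yields
\[
s_M^{(k)} \,=\, k\binom{m}{k}\,s_k^M\int_0^1 t^{m-k}(1-t)^{k-1}\,G(t)\,dt,
\]
and the proof concludes by recognizing $k\binom{m}{k}\,t^{m-k}(1-t)^{k-1}=\frac{m!}{(m-k)!(k-1)!}\,t^{m-k}(1-t)^{k-1}$ as the beta$(m-k+1,k)$ density $r_{k,m}(t)$. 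The only delicate point is the modular composition identity for reliability functions; once that is in hand, the remainder is routine bookkeeping gluing together Algorithms~\ref{algo:s} and~\ref{algo:ss}.
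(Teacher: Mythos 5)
Your proposal is correct and follows essentially the same route as the paper's own proof: compose the reliability functions via the modular decomposition, use the chain rule and multilinearity of $h_\psi$ to factor $g(x,z)=g_\chi(x)\,(\partial_{[M]}h_\psi)(z)$, identify the scalar factor of $c_{k-1}(z)$ as $k\binom{m}{k}s_k^M$ via Algorithm~\ref{algo:s}, and recognize the beta density in Step~4 of Algorithm~\ref{algo:ss}. Your brief justification of the composition identity $h(\bfx)=h_\psi\bigl(h_\chi(\bfx^M),\bfx^{C\setminus M}\bigr)$ via uniqueness of multilinear extensions is a small addition the paper leaves implicit, but otherwise the arguments coincide.
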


\begin{proof}
We prove the result by using Algorithm~\ref{algo:ss}. Let $h_{\chi}\colon [0,1]^M\to\R$ be the reliability function of $\chi$. By Eq.~(\ref{eq:8dsf6}) we then have $h(\bfx) = h_{\psi}\big(h_{\chi}(\bfx^M),\bfx^{C\setminus M}\big)$. Since $\partial_{[M]} h_{\psi}$ does not depend upon its $[M]$-variable, by the chain rule we have
$$
g(x,z) ~=~ \frac{d}{dx}\, h_{\psi}\big(h_{\chi}(x),z\big) ~=~ g_{\chi}(x)\, (\partial_{[M]} h_{\psi})(z)\, ,
$$
where $g_{\chi}(x)=Dh_{\chi}(x)$. Thus, for every $k\in\{1,\ldots,m\}$, we have
$$
c_{k-1}(z) ~=~ [x^{k-1}](R_1^{m-1}g)(x+1,z) ~=~ (\partial_{[M]} h_{\psi})(z)~ [x^{k-1}](R_1^{m-1}g_{\chi})(x+1)\, .
$$
But by Algorithm~\ref{algo:s}, we have $[x^{k-1}](R_1^{m-1}g_{\chi})(x+1) = k\, {m\choose k}\, s_k^M$. Therefore, by
Algorithm~\ref{algo:ss}, we finally obtain
$$
s_M^{(k)} ~=~  s_k^M\, k\, {m\choose k}\, \int_0^1 t^{m-k}\, (1-t)^{k-1}\, (\partial_{[M]} h_{\psi})(t)\, dt\, ,
$$
where $k\, {m\choose k}=1/\int_0^1 t^{m-k}\, (1-t)^{k-1}\, dt$ (use Eq.~(\ref{eq:ClasId})). This proves the theorem.
\end{proof}

\section*{Acknowledgments}

This research is supported by the internal research project F1R-MTH-PUL-12RDO2 of the University of Luxembourg.

\end{document}